\numberwithin{equation}{section} \textwidth=15.5cm
\newtheorem{theorem}{Theorem}[section]
\newtheorem{corollary}[theorem]{Corollary}
\newtheorem{lemma}[theorem]{Lemma}
\theoremstyle{definition}
\begin{document}

\numberwithin{equation}{section}

\title[$2$-local automorphisms on finite-dimensional
Lie algebras]{$2$-local automorphisms on finite-dimensional Lie
algebras}

\author[Ayupov]{Shavkat Ayupov}
\email{sh$_{-}$ayupov@mail.ru}
\address{Dormon yoli 29, Institute of
 Mathematics,  National University of
Uzbekistan,  100125  Tashkent,   Uzbekistan}

\author[Kudaybergenov]{Karimbergen Kudaybergenov}
\email{karim2006@mail.ru}
\address{Ch. Abdirov 1, Department of Mathematics, Karakalpak State University, Nukus 230113, Uzbekistan}

%\subjclass[2011]{Primary 46L57; 46L40} %; Secondary hsecondary classesi}

%\keywords{triple derivation; $2$-local triple derivation}

\date{}
\maketitle

\begin{abstract} We prove that every $2$-local
automorphism on a finite-dimensional  semi-simple  Lie algebra
$\mathcal{L}$ over an algebraically closed field of characteristic
zero is an  automorphism. We also show that each finite-dimensional
nilpotent Lie algebra $\mathcal{L}$ with $\dim \mathcal{L}\geq 2$
admits a $2$-local automorphism which is not an automorphism.

{\it Keywords:} Semi-simple Lie algebra, nilpotent Lie algebra, automorphism, $2$-local
automorphism.
\\

{\it AMS Subject Classification:} 17A36,  17B20, 17B40.

\end{abstract}

\maketitle
\thispagestyle{empty}

% \section{Introduction: Orthogonality preservers, $M$- and $L$-norms}
\section{Introduction}\label{sec:intro}

In last decades a series  of papers have been devoted to
 mappings which are close to automorphism and derivation of
associative algebras (especially of operator algebras and C*-algebras).
Namely, the problems of description of so called
 local automorphisms (respectively, local derivations) and 2-local
automorphisms (respectively, 2-local derivations) has been considered.
Later similar problems were extended for non associative algebras,
in particular, for Lie algebras case. The present paper is devoted to
 study of 2-local automorphisms of finite dimensional Lie algebras.

Let
$\mathcal{A}$ be an associative  algebra. Recall that a linear mapping
$\Phi$ of $\mathcal{A}$ into itself is called a local automorphism
(respectively, a local derivation) if for every $x\in \mathcal{A}$
there exists an automorphism (respectively, a derivation) $\Phi_x$
of $\mathcal{A},$ depending on $x,$ such that $\Phi_x(x)=\Phi(x).$
These notions were introduced and investigated independently by
Kadison~\cite{Kadison90} and Larson and Sourour~\cite{Larson90}.
Later, in 1997, P.~\v{S}emrl~\cite{Semrl97}  introduced the concepts of
2-local automorphisms and 2-local derivations. A map
$\Phi:\mathcal{A} \rightarrow \mathcal{A}$ (not linear in general)
is called a 2-local automorphism (respectively, a 2-local
derivation) if for every $x, y\in \mathcal{A},$ there exists an
automorphism (respectively, a derivation) $\Phi_{x,y}:\mathcal{A}
\rightarrow \mathcal{A}$ (depending on $x, y.$) such that $\Phi_{x.y}(x)=\Phi(x),$
$\Phi_{x,y}(y)=\Phi(y).$   In \cite{Semrl97}, P.~\v{S}emrl
described 2-local derivations and 2-local automorphisms on the
algebra $B(H)$ of all bounded linear operators on the
infinite-dimensional separable Hilbert space $H.$ Namely, he has proved that
 every 2-local automorphism (respectively, 2-local derivation) on $B(H)$ is an automorphism
 (respectively, a derivation).
 A similar result for finite-dimensional case appeared later
in~\cite{Kim04}. Further, in~\cite{AK}, a new techniques was introduced
to prove the same result for an arbitrary Hilbert space $H$ (no separability is
assumed).

Afterwards the above considerations  give  arise to similar questions in von Neumann algebras framework.
First positive results have been obtained in ~\cite{AKNA} and ~\cite{AA2}
finite and semi-finite von Neumann algebras respectively, by showing that all
2-local derivations on these algebras are derivations. Finally, in~~\cite{AK14},
the same result was obtained for purely infinite von Neumann algebras. This completed the
solution of the above problem for arbitrary von Neumann algebras.

It is natural to study corresponding analogues of these problems for
automorphisms or derivations of non-associative algebras. We shall consider
 the case of finite-dimensional  Lie algebras.

Let$\mathcal{L}$ be a Lie algebra. A derivation (respectively, an
automorphism) $\Phi$ of a Lie algebra $\mathcal{L}$ is a linear
(respectively, an invertible linear) map $\Phi:\mathcal{L}
\rightarrow \mathcal{L}$ which satisfies the condition  $\Phi([x, y])
=[\Phi(x), y] +[x, \Phi(y)]$ (respectively, $\Phi([x, y])
=[\Phi(x), \Phi(y)])$ for all $x, y\in \mathcal{L}.$ The set of
all automorphisms of a Lie algebra $\mathcal{L}$  is denoted by
$\mbox{Aut} \mathcal{L}.$

The notions of
a local derivation (respectively, a local automorphism) and a 2-local derivation
(respectively, a 2-local automorphism) for Lie algebras are defined as above,
similar to the associative case. Every derivation (respectively,
automorphism) of a Lie algebra $\mathcal{L}$ is a local derivation
(respectively, local automorphism) and a 2-local derivation
(respectively, 2-local automorphism). For a given Lie algebra
$\mathcal{L},$ the main problem concerning these notions is to prove that
they automatically become a  derivation (respectively, an automorphism)
or to give examples of local and 2-local derivations or automorphisms of $\mathcal{L},$
which are not  derivations or automorphisms, respectively.

 For a finite-dimensional semi-simple Lie algebra
$\mathcal{L}$ over an algebraically closed field of characteristic
zero, the derivations and automorphisms of $\mathcal{L}$ are
completely described in \cite{Humphreys}.

Recently in \cite{AK2016}
 we proved that every local derivation on semi-simple Lie algebras are
derivations and gave examples of nilpotent finite-dimensional Lie
algebras with local derivations which are not derivations.

Earlier in~\cite{AKR15} the authors have proved that every 2-local derivation on a
semi-simple  Lie algebra $\mathcal{L}$ is a derivation, and showed that each
finite-dimension nilpotent Lie algebra, with dimension larger than
two,  admits a 2-local derivation which is not a derivation.

In~\cite{Chen},  Chen and Wang initiated study of  2-local
automorphisms of  finite-dimensional Lie algebras. They have proved that
if $\mathcal{L}$ is a simple Lie algebra which belongs to one of the  types
 $A_l\, (l\geq 1), D_l\, (l\geq4),$ or  $E_k\,
(k=6, 7, 8)$ over an algebraically closed field of characteristic
zero, then every 2-local automorphism of $\mathcal{L},$ is an automorphism.

In the present paper we generalize the result of  ~\cite{Chen} and
prove that every $2$-local automorphisms of finite-dimensional semi-simple Lie algebras over
an algebraically closed field of characteristic zero is an automorphism. Moreover, we show
that each  finite-dimensional nilpotent Lie algebra admits
a 2-local automorphism which is not an automorphism.

\section{$2$-local automorphisms on finite-dimensional semi-simple  Lie algebras}
\label{lie}

In this paper the notations concerning Lie algebras mainly follow
 \cite{Humphreys}.

All algebras and vector spaces are considered  over an
algebraically closed field  $\mathbb{F}$ of characteristic zero.

The main result of this section is given as follows.

\begin{theorem}\label{mainautho}
Let $\mathcal{L}$ be an arbitrary  finite-dimensional semi-simple
Lie algebra over  an algebraically closed field  $\mathbb{F}$ of
characteristic zero. Then any {\rm(}not necessarily linear{\rm)}
$2$-local automorphism $T: \mathcal{L}\rightarrow \mathcal{L}$ is
an automorphism.
\end{theorem}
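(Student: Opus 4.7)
My plan has two stages: reduce to the case of simple $\mathcal{L}$, then analyse $T$ via the root space decomposition. For the reduction, decompose $\mathcal{L} = \mathcal{L}_1 \oplus \cdots \oplus \mathcal{L}_s$ into simple ideals. Every automorphism of $\mathcal{L}$ permutes the $\mathcal{L}_i$ (swapping only isomorphic summands), so for each $x \in \mathcal{L}_i$ one has $T(x) = \Phi_{x,x}(x) \in \mathcal{L}_{\sigma(i)}$ for a well-defined permutation $\sigma$ of the indices. Composing $T$ with a fixed automorphism implementing $\sigma^{-1}$ reduces the problem to the case where $\mathcal{L}$ is simple.

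For simple $\mathcal{L}$, first observe that $T$ is homogeneous: $T(\lambda x) = \Phi_{x,\lambda x}(\lambda x) = \lambda T(x)$. Fix a Cartan subalgebra $\mathcal{H}$ and a strongly regular semi-simple $h \in \mathcal{H}$, so $C_{\mathcal{L}}(h) = \mathcal{H}$ and the values $\alpha(h)$ separate distinct roots. Since $T(h) = \Phi_{h,h}(h)$ is again strongly regular and all Cartan subalgebras are conjugate, after composition with an inner automorphism I may assume $T(h) = h$; the identity $[T(h), T(x)] = \Phi_{h,x}([h,x])$ then forces $T(\mathcal{H}) \subseteq \mathcal{H}$. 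The restriction $T|_{\mathcal{H}}$ takes values in automorphic images of $\mathcal{H}$, each of which acts on $\mathcal{H}$ as an element of the extended Weyl group $W \rtimes \mathrm{Out}(\mathcal{L})$, so a further composition reduces to $T|_{\mathcal{H}} = \mathrm{id}_{\mathcal{H}}$. Applying 2-locality to pairs $(h', e_\alpha)$ then gives $[h', T(e_\alpha)] = \alpha(h')\,T(e_\alpha)$ for every $h' \in \mathcal{H}$, so $T(e_\alpha) = c_\alpha e_\alpha$ for some $c_\alpha \in \mathbb{F}^{*}$. Finally, 2-locality on pairs $(e_\alpha, e_\beta)$ combined with the Chevalley relation $[e_\alpha, e_\beta] = N_{\alpha,\beta}\, e_{\alpha+\beta}$ yields the cocycle identity $c_\alpha c_\beta = c_{\alpha+\beta}$, so $T$ agrees with an explicit diagonal automorphism $\Psi$ on the Chevalley generators.

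The main hurdle will be to pass from agreement on the Chevalley generators to the global equality $T = \Psi$, i.e., to establish additivity of $T$. For arbitrary $x, y$ the strategy is to compare $T(x+y) = \Phi_{x+y,z}(x) + \Phi_{x+y,z}(y)$ with $T(x) + T(y) = \Phi_{x,z}(x) + \Phi_{y,z}(y)$; the difficulty lies in choosing the test element $z$ — typically strongly regular, so that its stabilizer in $\mathrm{Aut}(\mathcal{L})$ is small — in such a way that the three a priori distinct automorphisms $\Phi_{x+y,z}, \Phi_{x,z}, \Phi_{y,z}$ are forced to agree on the summands $x$ and $y$. The non-simply-laced types $B_l, C_l, F_4, G_2$, not treated in \cite{Chen}, also deserve attention: their trivial outer automorphism groups eliminate the diagram-automorphism complications in the Weyl-group step above, but they preclude any direct appeal to the triality and diagram-symmetry constructions used in loc.~cit., so an approach uniform across types is needed.
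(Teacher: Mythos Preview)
Your outline has a genuine gap at precisely the point you flag as the ``main hurdle'': additivity. The paper bypasses your test-element strategy entirely by exploiting the $\mathrm{Aut}(\mathcal{L})$-invariance of the Killing form. For any $x,y$ one has
\[
\langle T(x),T(y)\rangle=\langle \Phi_{x,y}(x),\Phi_{x,y}(y)\rangle=\langle x,y\rangle,
\]
and hence $\langle T(x+y)-T(x)-T(y),\,T(z)\rangle=0$ for every $z$. Once you have already arranged $T(h)=h$ for $h\in\mathcal{H}$ and $T(e_\alpha)=c_\alpha e_\alpha$ with $c_\alpha\neq 0$, the image of $T$ spans $\mathcal{L}$, so nondegeneracy of the Killing form forces $T(x+y)=T(x)+T(y)$. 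This is the missing idea; your proposed approach of forcing $\Phi_{x+y,z},\Phi_{x,z},\Phi_{y,z}$ to agree on summands has no clear mechanism for success.

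Two further remarks. First, your cocycle step is not justified: from $\Phi_{e_\alpha,e_\beta}(e_\alpha)=c_\alpha e_\alpha$ and $\Phi_{e_\alpha,e_\beta}(e_\beta)=c_\beta e_\beta$ you get $\Phi_{e_\alpha,e_\beta}(e_{\alpha+\beta})=c_\alpha c_\beta e_{\alpha+\beta}$, but nothing ties this to $T(e_{\alpha+\beta})=c_{\alpha+\beta}e_{\alpha+\beta}$, since $\Phi_{e_\alpha,e_\beta}$ need not fix $d$. The paper avoids this altogether: it normalises at the outset by $\Phi_{d,q}^{-1}$ with $q=\sum_{\alpha\in R}e_\alpha$, so that $T_1(d)=d$ and $T_1(q)=q$; once linearity is in hand, $T_1(q)=\sum c_\alpha e_\alpha=q$ forces all $c_\alpha=1$ directly. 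Second, the reduction to simple ideals is unnecessary (and would still leave you to prove additivity across components): the Killing-form argument works uniformly for semi-simple $\mathcal{L}$, and in particular handles the non-simply-laced types without any type-by-type analysis.
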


.

Let $\mathcal{L}$ be a Lie algebra. The \textit{center} of $\mathcal{L}$ is denoted by $Z(\mathcal{L}):$
$$
Z(\mathcal{L})=\{x\in \mathcal{L}: [x,y]=0,\,\forall\,  y\in
\mathcal{L}\}.
$$

 A Lie algebra $\mathcal{L}$ is said
to be \textit{solvable} if $\mathcal{L}^{(k)}=\{0\}$ for some
integer $k,$ where $\mathcal{L}^{(0)}=\mathcal{L},$
$\mathcal{L}^{(k)}=[\mathcal{L}^{(k-1)}, \mathcal{L}^{(k-1)}],\,
k\geq1.$ Any Lie algebra $\mathcal{L}$ contains a unique maximal
solvable ideal, called the radical of $\mathcal{L}$ and denoted by
$\mbox{Rad} \mathcal{L}.$ A non trivial Lie algebra $\mathcal{L}$
is called \textit{semi-simple} if $\mbox{Rad} \mathcal{L}=0.$ That
is equivalent to requiring that $\mathcal{L}$ have no nonzero
abelian ideals.

Given a vector space $V,$ let $\mathfrak{gl}(V)$ denote the Lie algebra
of all linear endomorphisms
of $V.$ A representation of a Lie algebra $\mathcal{L}$ on $V$ is
a Lie algebra homomorphism $\rho:\mathcal{L}\rightarrow \mathfrak{gl}(V).$
For example, $\mbox{ad}:\mathcal{L}\rightarrow \mathfrak{gl}(\mathcal{L})$
given by $\mbox{ad}(x)(y)=[x,y]$ is a representation of
$\mathcal{L}$ on the vector space $\mathcal{L}$ called the adjoint
representation. If $V$ is a finite-dimensional vector space then
 the representation $\rho$ is said to be finite-dimensional.

 Let $\mathcal{L}$ be a Lie algebra and let $\rho:\mathcal{L}\rightarrow \mathfrak{gl}(V)$ be a
finite-dimensional representation of $\mathcal{L}.$ Then the map
$\tau:\mathcal{L}\times \mathcal{L}\rightarrow \mathbb{F}$ defined
by
$$
\tau(x,y)=\mbox{tr}(\rho(x)\rho(y))
$$
is a symmetric bilinear form on $\mathcal{L}$  called the
\textit{trace form} on $\mathcal{L}$ relative to $\rho,$ where
$\mathrm{tr}$ denotes the trace of a linear operator. In
particular, for $V=\mathcal{L}$ and $\rho=\mathrm{ad}$ the
corresponding trace form is called the \textit{Killing form} and
it is denoted by $\langle\cdot,\cdot\rangle.$ The Killing form has
the following $\mbox{Aut}(\mathcal{L})$-invariance  property (see
\cite[P. 231]{SW}), which is an important tool  in the study of $2$-local
automorphisms of semi-simple Lie algebras:
\begin{center}
$\langle \Phi(x), \Phi(y)\rangle=\langle x, y\rangle$ for all
$x,y\in \mathcal{L}.$
\end{center}

Another importance of the Killing form is the following property.
A Lie algebra $\mathcal{L}$ is semi-simple if and only if its
Killing form is non degenerate, i.e. $\langle x, y\rangle=0$ for
all $y\in \mathcal{L}$ implies that $x=0.$

 A \textit{Cartan subalgebra} $\mathcal{H}$ of   a
 semi-simple Lie algebra $\mathcal{L}$ is a nilpotent subalgebra
which coincides with its centralizer: $C(\mathcal{H}) = \{x\in
\mathcal{L}: [x, h] = 0,\, \forall h\in \mathcal{H}\} =
\mathcal{H}.$

A Cartan subalgebra $\mathcal{H}$ of a  finite-dimensional
semi-simple Lie algebra $\mathcal{L}$ is \textit{abelian}, i.e.
$[x,y]=0$ for all $x,y\in \mathcal{H}.$

From now on in this section let  $\mathcal{L}$  be a
finite-dimensional simple Lie algebra of the rank
$l,$   and let $\mathcal{H}$ be a fixed standard Cartan subalgebra of
$\mathcal{L}$ with  the corresponding root
system  $R\subset \mathcal{H}$. Denote by  $\triangle =\{\alpha_1, \alpha_2, \cdots, \alpha_l\}$
a fixed base of $R,$ and by $R^+$  the set of corresponding positive root
system of $\mathcal{L}$ relative to $R.$  For $\alpha\in R,$ let
$$
L_\alpha=\{x \in  L: [h, x] = \alpha(h)x, \, \forall\, h\in
\mathcal{H}\}
$$
be the one-dimensional root space relative to $\alpha.$ For each
$\alpha\in R^+,$  let $e_\alpha$ be a non-zero element of
$\mathcal{L}_\alpha,$ then there is a unique element
$e_{-\alpha}\in \mathcal{L}_{-\alpha}$ such that the linear span of the elements $e_\alpha,
e_{-\alpha},$ and $ h_\alpha=[e_\alpha, e_{-\alpha}]$ is a
three-dimensional simple subalgebra of $\mathcal{L}$ isomorphic to the simple Lie algebra
$\mathfrak{sl}(2, \mathbb{F}).$  The set $\{h_\alpha, e_\alpha,
e_{-\alpha}: \alpha\in R^+\}$ forms a basis of $\mathcal{L}.$

By \cite[Lemma~2.2]{Wang}, there exists an element $d\in
\mathcal{H}$ such that $\alpha(d) \neq \beta(d)$ for every
$\alpha, \beta\in R, \alpha\neq \beta.$  Such elements $d$ are
called \textit{strongly regular} elements of $\mathcal{L}.$ Again
by \cite[Lemma~2.2]{Wang}, every strongly regular element is a
\textit{regular semi-simple element}, i.e.
$$
\{x\in \mathcal{L}: [d, x]=0\}=\mathcal{H}.
$$
Choose a fixed strongly regular element $d\in \mathcal{H}.$ Also
set
\begin{equation}\label{qqq}
q =\sum\limits_{\alpha\in R}e_\alpha.
\end{equation}

\begin{lemma}\label{twoauto} Let $\Phi$ be an automorphism on \(\mathcal{L}\) such that \(\Phi(d)=d.\)
Then
\begin{enumerate}
\item[(i)] For any $h \in \mathcal{H},$  $\Phi(h)=h;$
\item[(ii)] For any $\alpha\in R,$  there exists a nonzero  $c_\alpha\in \mathbb{F}$ such
that $\Phi(e_\alpha)=c_\alpha e_\alpha.$
\end{enumerate}
\end{lemma}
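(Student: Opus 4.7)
The plan is to exploit that $d$ is strongly regular and that $\Phi(d)=d$ forces $\Phi$ to commute with $\mathrm{ad}(d)$, which by the root space decomposition of $\mathcal{L}$ pins down $\Phi$ on every weight component.

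I would first establish (ii). For each $\alpha\in R$, applying $\Phi$ to the defining relation $[d,e_\alpha]=\alpha(d)e_\alpha$ gives $[d,\Phi(e_\alpha)]=\alpha(d)\Phi(e_\alpha)$, so $\Phi(e_\alpha)$ lies in the $\alpha(d)$-eigenspace of $\mathrm{ad}(d)$. By strong regularity, the scalars $0$ and $\{\alpha(d)\}_{\alpha\in R}$ are pairwise distinct, so this eigenspace is exactly the one-dimensional root space $\mathcal{L}_\alpha$. Hence $\Phi(e_\alpha)=c_\alpha e_\alpha$ for some scalar $c_\alpha$, and $c_\alpha\neq 0$ because $\Phi$ is invertible.

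For (i), take $h\in\mathcal{H}$. Since $\mathcal{H}$ is abelian, $[d,h]=0$, so applying $\Phi$ yields $[d,\Phi(h)]=0$, and regularity of $d$ gives $\Phi(h)\in\mathcal{H}$. To upgrade this to $\Phi(h)=h$, I would apply $\Phi$ to $[h,e_\alpha]=\alpha(h)e_\alpha$ and invoke (ii): the left side becomes $[\Phi(h),c_\alpha e_\alpha]=c_\alpha\alpha(\Phi(h))e_\alpha$, while the right side is $c_\alpha\alpha(h)e_\alpha$. Cancellation gives $\alpha(\Phi(h))=\alpha(h)$ for every $\alpha\in R$. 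Since for a semi-simple Lie algebra the root system spans $\mathcal{H}^{\ast}$, this identity over all $\alpha$ forces $\Phi(h)=h$.

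There is no real obstacle here: the whole argument is a rigidity statement coming from the fact that $\mathrm{ad}(d)$ has simple spectrum with one-dimensional eigenspaces $\mathcal{L}_\alpha$ and with $\mathcal{H}$ as its kernel, leaving $\Phi$ no freedom beyond scaling each root vector, and the scalar on $\mathcal{H}$ is then determined by the action on the $e_\alpha$.
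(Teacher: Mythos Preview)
Your argument is correct and matches the paper's proof essentially step for step: both derive $[d,\Phi(e_\alpha)]=\alpha(d)\Phi(e_\alpha)$ and use strong regularity of $d$ to force $\Phi(e_\alpha)\in\mathcal{L}_\alpha$, then use $[d,\Phi(h)]=0$ to get $\Phi(h)\in\mathcal{H}$ and compare $[\Phi(h),\Phi(e_\alpha)]$ two ways to conclude $\alpha(\Phi(h))=\alpha(h)$ for all roots. The only cosmetic difference is that the paper expands $\Phi(e_\alpha)$ in the basis $\{h_\alpha,e_\beta\}$ and compares coefficients, whereas you phrase the same step as identifying the $\alpha(d)$-eigenspace of $\mathrm{ad}(d)$ directly.
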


\begin{proof}
For \(\alpha\in R\) we have
\begin{eqnarray*}
\left[d, \Phi(e_\alpha)\right] & = & \left[\Phi(d),
\Phi(e_\alpha)\right]=\Phi\left([d,
e_\alpha]\right)=\Phi(\alpha(d)e_\alpha)=\alpha(d)\Phi(e_\alpha),
\end{eqnarray*}
i.e.
\begin{eqnarray}\label{dalpha} \left[d, \Phi(e_\alpha)\right]
=\alpha(d)\Phi(e_\alpha).
\end{eqnarray}
Let us represent the element \(\Phi(e_\alpha)\) in the form
\begin{eqnarray*}
\Phi(e_\alpha) & = & h_\alpha+\sum\limits_{\beta\in R} c_\beta
e_\beta,
\end{eqnarray*}
where \(h_\alpha\in \mathcal{H},\) \(c_\beta\in \mathbb{F}.\)
Multiplying both sides of the last equality  by \(\alpha(d)\) and
taking into account \eqref{dalpha}, we have
\begin{eqnarray}\label{dbeta}
\sum\limits_{\beta\in R} c_\beta
\beta(d)e_\beta=\alpha(d)h_\alpha+\sum\limits_{\beta\in R} c_\beta
\alpha(d)e_\beta.
\end{eqnarray}
By properties of strongly regular elements, \(\alpha(d)\neq 0,\)
and \(\beta(d)\neq \alpha(d)\) for any \(\beta\neq\alpha.\) Thus
by the equality \eqref{dbeta}, \(h_\alpha=0\) and \(c_\beta=0\)
for any \(\beta\neq\alpha,\) \(\beta\in R.\)   Therefore
\(\Phi(e_\alpha)=c_\alpha e_\alpha.\) Since \(\Phi\) is a
bijection, then \(\Phi(e_\alpha) \neq 0,\) and so
\(c_\alpha\neq0.\) Thus (ii) holds.

Take an arbitrary element  \(h\in \mathcal{H}.\) We have
\begin{eqnarray*}
\left[d, \Phi(h)\right] & = & \left[\Phi(d),
\Phi(h)\right]=\Phi\left([d, h]\right)=\Phi(0)=0.
\end{eqnarray*}
Since \(d\) is a regularly semi-simple element, it follows that
\(\Phi(h)\in \mathcal{H}.\)

For every \(\alpha\in R\) we have
\begin{eqnarray*}
\left[\Phi(h), \Phi(e_\alpha)\right] & = & \Phi\left([h,
e_\alpha]\right)=\Phi(\alpha(h)e_\alpha)=\alpha(h)\Phi(e_\alpha)=\alpha(h)c_\alpha
e_\alpha.
\end{eqnarray*}
i.e.
\begin{eqnarray*}
\left[\Phi(h), \Phi(e_\alpha)\right] & = & \alpha(h)c_\alpha
e_\alpha.
\end{eqnarray*}

On the other hand, taking into account \(\Phi(h)\in \mathcal{H},\)
we have
\begin{eqnarray*}
\left[\Phi(h), \Phi(e_\alpha)\right] & = & \left[\Phi(h), c_\alpha
e_\alpha\right]=c_\alpha \alpha\left(\Phi(h)\right)e_\alpha.
\end{eqnarray*}
Thus \(c_\alpha \alpha(h)=c_\alpha \alpha\left(\Phi(h)\right),\)
which implies that \(\alpha(h)=\alpha\left(\Phi(h)\right)\) for
any \(\alpha\in R,\)  Therefore \(\Phi(h) = h,\) i.e., (i) holds.
The proof is complete.
\end{proof}

The following  assertion has been proved in \cite[Lemma~3.3]{Chen}
for the case of simple algebras of types $A_l\, (l\geq 1), D_l\,
(l\geq4), E_k\, (k=6, 7, 8).$ From Lemma~\ref{twoauto} we get the
following generalization for an arbitrary semi-simple Lie algebra.

\begin{lemma}\label{two} Let $T$ be a 2-local automorphism \(\mathcal{L}\) such that \(T(d)=d.\)
Then
\begin{enumerate}
\item $T(h)=h$ for all $h \in \mathcal{H};$
\item for every  $\alpha\in R,$  there exists a nonzero  $c_\alpha\in \mathbb{F}$ such
that $T(e_\alpha)=c_\alpha e_\alpha.$
\end{enumerate}
\end{lemma}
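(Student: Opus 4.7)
The plan is to reduce Lemma~\ref{two} directly to Lemma~\ref{twoauto} by invoking the defining property of a $2$-local automorphism at carefully chosen pairs, always using $d$ as one of the two inputs. This way, each auxiliary automorphism produced by the $2$-local property is forced to fix $d$, and Lemma~\ref{twoauto} then does all the real work.

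First I would prove (1). Fix an arbitrary $h\in\mathcal{H}$ and apply the definition of a $2$-local automorphism to the pair $(d,h)$: there exists an automorphism $\Phi_{d,h}\in\mathrm{Aut}\,\mathcal{L}$ with $\Phi_{d,h}(d)=T(d)=d$ and $\Phi_{d,h}(h)=T(h)$. Since $\Phi_{d,h}(d)=d$, Lemma~\ref{twoauto}(i) applies and gives $\Phi_{d,h}(h)=h$, so $T(h)=h$. As $h\in\mathcal{H}$ was arbitrary, this proves (1).

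Next I would prove (2) by the same device. Fix $\alpha\in R$ and apply the $2$-local property to the pair $(d,e_\alpha)$: there is an automorphism $\Phi_{d,e_\alpha}\in\mathrm{Aut}\,\mathcal{L}$ with $\Phi_{d,e_\alpha}(d)=T(d)=d$ and $\Phi_{d,e_\alpha}(e_\alpha)=T(e_\alpha)$. Since $\Phi_{d,e_\alpha}(d)=d$, Lemma~\ref{twoauto}(ii) yields a nonzero scalar $c_\alpha\in\mathbb{F}$ with $\Phi_{d,e_\alpha}(e_\alpha)=c_\alpha e_\alpha$, hence $T(e_\alpha)=c_\alpha e_\alpha$.

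There is essentially no obstacle here, since the construction of the strongly regular element $d$ and Lemma~\ref{twoauto} have already done the heavy lifting; the only point to note is that a priori the scalar $c_\alpha$ produced for different roots $\alpha$ comes from \emph{different} automorphisms $\Phi_{d,e_\alpha}$ (one per $\alpha$), so nothing is claimed about compatibility between the $c_\alpha$'s at this stage. That compatibility, which is what will eventually let one upgrade $T$ to a genuine automorphism, is postponed to later steps in the proof of Theorem~\ref{mainautho}.
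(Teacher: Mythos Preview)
Your proof is correct and follows essentially the same argument as the paper: apply the $2$-local property to the pair $(d,h)$ (respectively $(d,e_\alpha)$) so that the resulting automorphism fixes $d$, and then invoke Lemma~\ref{twoauto}. The paper omits writing out part (2) explicitly, merely saying it is done ``by a similar way,'' whereas you spell it out and add a helpful remark about the $c_\alpha$'s coming from different automorphisms.
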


\begin{proof} For \(h\in \mathcal{H}\) take an automorphism
\(\Phi_ {h,d}\) such that
\begin{center}
\(\Phi_ {h,d}(h)=T(h)\) and \(\Phi_ {h,d}(d)=T(d).\)
\end{center}
Since \(\Phi_ {h,d}(d)=T(d)=d,\)  Lemma~\ref{twoauto} implies
that \(\Phi_ {h,d}(h)=h,\) and therefore \(T(h)=h.\)

By a similar way we can check (2). The proof is complete.
\end{proof}

\begin{lemma}\label{addit} Let $T$ be a $2$-local
automorphism on a finite-dimensional semi-simple Lie algebra
$\mathcal{L}$ such that  $T(d)=d.$  Then $T$ is linear.
\end{lemma}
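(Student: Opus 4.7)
The plan is to extract an explicit formula for $T(x)$ from the root-space decomposition and read off linearity directly. Specifically, I aim to prove that
\begin{equation*}
T\Bigl(h_0 + \sum_{\alpha\in R} x_\alpha e_\alpha\Bigr) \;=\; h_0 + \sum_{\alpha\in R} c_\alpha x_\alpha e_\alpha,
\end{equation*}
where the $c_\alpha$ are the fixed nonzero scalars furnished by Lemma~\ref{two}(2). Since the right-hand side is manifestly linear in $x$, this settles the claim.

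First I would fix $x \in \mathcal{L}$, write it as $x = h_0 + \sum_{\alpha\in R} x_\alpha e_\alpha$, and choose an automorphism $\Phi := \Phi_{x,d}$ satisfying $\Phi(x) = T(x)$ and $\Phi(d) = T(d) = d$. Since $\Phi$ fixes $d$, Lemma~\ref{twoauto} gives $\Phi(h_0) = h_0$ and $\Phi(e_\alpha) = d_\alpha e_\alpha$ for some nonzero scalars $d_\alpha = d_\alpha(x) \in \mathbb{F}$ (depending on the choice of $\Phi$, hence on $x$). Applying $\Phi$ to the decomposition of $x$ yields
\begin{equation*}
T(x) \;=\; h_0 + \sum_{\alpha\in R} x_\alpha d_\alpha e_\alpha,
\end{equation*}
so the whole problem reduces to showing $x_\alpha d_\alpha = c_\alpha x_\alpha$ for every $\alpha \in R$.

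The decisive tool is the $\mathrm{Aut}(\mathcal{L})$-invariance of the Killing form, which transfers automatically to a $2$-local automorphism: for every pair $u,v$ one chooses an automorphism agreeing with $T$ on $\{u,v\}$, giving $\langle T(u), T(v)\rangle = \langle u, v\rangle$. Applying this with $v = e_{-\alpha}$, using that the Killing form pairs $\mathcal{L}_\beta$ only with $\mathcal{L}_{-\beta}$ and annihilates $\mathcal{H}\times \mathcal{L}_\beta$ for $\beta \neq 0$, together with $T(e_{-\alpha}) = c_{-\alpha}e_{-\alpha}$ from Lemma~\ref{two}, I obtain
\begin{equation*}
x_\alpha \langle e_\alpha, e_{-\alpha}\rangle \;=\; \langle x, e_{-\alpha}\rangle \;=\; \langle T(x), T(e_{-\alpha})\rangle \;=\; c_{-\alpha} x_\alpha d_\alpha \langle e_\alpha, e_{-\alpha}\rangle.
\end{equation*}
Nondegeneracy of the Killing form forces $\langle e_\alpha, e_{-\alpha}\rangle \neq 0$, so $c_{-\alpha} x_\alpha d_\alpha = x_\alpha$; specializing to $x = e_\alpha$ gives $c_\alpha c_{-\alpha} = 1$, and the general case then yields $x_\alpha d_\alpha = c_\alpha x_\alpha$ for every $\alpha$, whether or not $x_\alpha$ vanishes.

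The main obstacle is that the scalars $d_\alpha$ produced by Lemma~\ref{twoauto} a priori depend on the particular automorphism $\Phi_{x,d}$, hence on $x$; Lemma~\ref{twoauto} alone never rules out that two different inputs $x,x'$ could force different eigenvalues on the same root space. The Killing form invariance --- available precisely because $\mathcal{L}$ is semi-simple --- is exactly what collapses this ambiguity and pins the $d_\alpha$ down to the universal constants $c_\alpha$ from Lemma~\ref{two}.
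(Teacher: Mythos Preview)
Your argument is correct. Both your proof and the paper's hinge on the same two facts: the $\mathrm{Aut}(\mathcal{L})$-invariance of the Killing form (hence $\langle T(u),T(v)\rangle=\langle u,v\rangle$ for any $2$-local automorphism) and Lemma~\ref{two} pinning down $T$ on $\mathcal{H}$ and on each $e_\alpha$. The organization, however, is genuinely different. The paper argues abstractly: it shows $\langle T(x+y)-T(x)-T(y),\,T(z)\rangle=0$ for every $z$, observes via Lemma~\ref{two} that the image of $T$ contains a basis of $\mathcal{L}$, and invokes nondegeneracy to obtain additivity; homogeneity is treated separately via $\Phi_{\lambda x,x}$. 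You instead extract an \emph{explicit formula}: writing $x=h_0+\sum_\alpha x_\alpha e_\alpha$ and using $\Phi_{x,d}$, you get $T(x)=h_0+\sum_\alpha d_\alpha x_\alpha e_\alpha$ and then pair against $T(e_{-\alpha})=c_{-\alpha}e_{-\alpha}$ to force $d_\alpha x_\alpha=c_\alpha x_\alpha$, using the relation $c_\alpha c_{-\alpha}=1$ obtained by specialization. What your approach buys is a concrete description of $T$ as the diagonal map $h_0+\sum x_\alpha e_\alpha\mapsto h_0+\sum c_\alpha x_\alpha e_\alpha$, from which linearity is immediate and which also feeds directly into the proof of Theorem~\ref{mainautho} (where one then shows $c_\alpha=1$); the paper's route is slightly shorter in that it avoids the auxiliary computation $c_\alpha c_{-\alpha}=1$ and the root-by-root case split on whether $x_\alpha=0$.
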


\begin{proof}
Firstly we show that
\begin{equation}\label{localkilling}
\langle T(x), T(y)\rangle=\langle x, y\rangle
\end{equation}
for all $x,y\in \mathcal{L}.$ Indeed, taking into account
$\mbox{Aut}(\mathcal{L})$-invariance of the Killing form we obtain
that
\begin{eqnarray*}
\langle T(x), T(y)\rangle & = & \langle \Phi_{x,y}(x),
\Phi_{x,y}(y)\rangle=\langle x, y\rangle.
\end{eqnarray*}

Now let $x,y,z\in \mathcal{L}$ be arbitrary elements. Using
equality~\eqref{localkilling} we obtain that
\begin{eqnarray*}
\langle T(x+y), T(z)\rangle & = & \langle x+y, z\rangle= \langle
x, z\rangle+ \langle y, z\rangle=  \\
&  = & \langle T(x), T(z)\rangle+\langle T(y), T(z)\rangle=
\langle T(x)+T(y), T(z)\rangle,
\end{eqnarray*}
i.e.
$$
\langle T(x+y), T(z)\rangle= \langle T(x)+T(y), T(z)\rangle.
$$
or
$$
\langle T(x+y)-T(x)-T(y), T(z)\rangle=0.
$$
Putting in the last equality $z=h\in \mathcal{H}$ and
$z=e_\alpha,$ where $\alpha\in R,$ by Lemma~\ref{two} we obtain
that
$$
\langle T(x+y)-T(x)-T(y), h\rangle=0
$$
and
$$
\langle T(x+y)-T(x)-T(y), e_\alpha\rangle=0
$$
for all $h\in \mathcal{H}$ and $\alpha\in R.$ Taking into account
bilinearity of the Killing form we have that
$$
\langle T(x+y)-T(x)-T(y), w\rangle=0
$$
for all $w\in \mathcal{L}.$ Since the Killing form $\langle\cdot,
\cdot\rangle$ is non-degenerate, the last equality implies that $
T(x+y)=T(x)+T(y)$ for all $x, y\in \mathcal{L}.$

Finally,
$$
T(\lambda x)=\Phi_{\lambda x, x}(\lambda x)=\lambda \Phi_{\lambda
x, x}(x)=\lambda T(x).
$$
So, $T$ is linear.  The proof is complete.
\end{proof}

\begin{proof}[\textit{Proof of Theorem~$\ref{mainautho}$}]
Let $T:\mathcal{L}\rightarrow \mathcal{L}$ be a $2$-local
automorphism  and suppose that $d\in \mathcal{H}$ is a strongly  regular element
and $q$ is the element defined by~\eqref{qqq}. Take an
automorphism $\Phi_{d, q}$ on $\mathcal{L}$ (depending on $d$ and
$q$) such that
\begin{center}
$\Phi_{d, q}(d)=T(d)$ and $\Phi_{d, q}(q)=T(q).$
\end{center}
Set $T_1=\Phi^{-1}_{d, q}\circ T.$ Then \(T_1\) is a
\(2\)-automorphism such that
\begin{center}
$T_1(d)=d$ and $T_1(q)=q.$
\end{center}
By Lemma~\ref{addit}, $T_1$ is linear.

Taking into account Lemma~\ref{two} we have
\begin{eqnarray*}
T_1(q) & = & T_1\left(\sum\limits_{\alpha\in R} e_\alpha\right)=
\sum\limits_{\alpha\in R}
T_1\left(e_\alpha\right)=\sum\limits_{\alpha\in R}  c_\alpha
e_\alpha.
\end{eqnarray*}
On the other hand
\begin{eqnarray*}
T_1(q) & = & q =\sum\limits_{\alpha\in R} e_\alpha.
\end{eqnarray*}
Comparing the last two equalities we obtain that $c_\alpha=1$ for
all $\alpha\in R.$ So,
\begin{center}
$T_1(h)=h$ and $T_1(e_\alpha)=e_\alpha$
\end{center}
for all $h\in \mathcal{H}$ and $\alpha\in R.$ Finally, in view
of the linearity of $T_1,$ the last two equalities imply
that $T_1(x)=x$ for all $x\in \mathcal{L}.$ Thus $T=\Phi_{d, q}$
is an automorphism.  The proof is complete.
\end{proof}

\section{$2$-local derivations on nilpotent Lie algebras}

In this section we give examples of $2$-local automorphisms on
nilpotent Lie algebras which are not automorphisms.

A Lie algebra $\mathcal{L}$ is called \textit{nilpotent} if
$\mathcal{L}^k=\{0\}$ for some $k \in \mathbb{N},$  where
$\mathcal{L}^0=\mathcal{L},$ $\mathcal{L}^k=[\mathcal{L}^{k-1},
\mathcal{L}],\, k\geq1.$

Let $D$ be a nilpotent derivation on $\mathcal{L},$  i.e.
\(D^n=0\) for some \(n\in \mathbb{N}.\) Then
$$
\Phi(x)=(\exp D)(x)=x+D(x)+\frac{D^2(x)}{2!}+\cdots
+\frac{D^{n-1}(x)}{(n-1)!}
$$
is an automorphism (see, e.g. \cite[P. 9]{Jacob}).

\begin{theorem}\label{pure}
Let $\mathcal{L}$ be a $n$-dimensional Lie algebra with $n\geq2.$
Suppose that
\begin{itemize}
\item[(i)] $\dim[\mathcal{L},\mathcal{L}]\leq n-2;$
\item[(ii)] the
intersection  $Z(\mathcal{L})\cap [\mathcal{L}, \mathcal{L}]$ is
non trivial.
\end{itemize}
Then $\mathcal{L}$ admits a $2$-local automorphism which is not an
automorphism.
\end{theorem}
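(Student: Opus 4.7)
The plan is to build $T$ as a perturbation of the identity in the direction of a nonzero central commutator. By hypothesis (ii) fix $0 \neq z \in Z(\mathcal{L}) \cap [\mathcal{L},\mathcal{L}]$. For every linear functional $f : \mathcal{L} \to \mathbb{F}$ vanishing on $[\mathcal{L},\mathcal{L}]$, the rank-one map $D_f(x) = f(x)\, z$ is a derivation: $D_f([x,y]) = 0$ since $f$ kills the commutator subalgebra, while $[D_f(x), y] + [x, D_f(y)] = f(x)[z,y] + f(y)[x,z] = 0$ by centrality of $z$. Moreover $D_f^2(x) = f(x) f(z)\, z = 0$, because $z \in [\mathcal{L},\mathcal{L}]$ forces $f(z) = 0$, so $D_f$ is nilpotent of index at most two. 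By the remark preceding the theorem, $\Phi_f := \exp D_f$, given explicitly by $\Phi_f(x) = x + f(x)\, z$, is an automorphism of $\mathcal{L}$.

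I will define $T(x) = x + \phi(x)\, z$ for a suitable function $\phi : \mathcal{L} \to \mathbb{F}$. By hypothesis (i), the quotient $V := \mathcal{L}/[\mathcal{L},\mathcal{L}]$ has dimension at least $2$. Let $\pi : \mathcal{L} \to V$ be the canonical projection and take $\phi = \tilde\phi \circ \pi$, where $\tilde\phi : V \to \mathbb{F}$ satisfies $\tilde\phi(0) = 0$ together with the homogeneity condition
\[ \tilde\phi(\lambda v) = \lambda\, \tilde\phi(v) \quad \text{for all } \lambda \in \mathbb{F},\ v \in V, \]
but is not $\mathbb{F}$-linear. Such a $\tilde\phi$ is constructed by choosing one nonzero representative $v_\ell$ on each one-dimensional subspace $\ell \subset V$, assigning scalars $\tilde\phi(v_\ell) \in \mathbb{F}$ freely (not arising as the restriction of any linear functional, which is possible because $\dim V \geq 2$), and extending by homogeneity. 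Since $\phi$ is not linear and $z \neq 0$, the map $T$ is not linear, and hence is not an automorphism.

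To verify that $T$ is a $2$-local automorphism, fix $x, y \in \mathcal{L}$ and set $\bar x = \pi(x)$, $\bar y = \pi(y)$. It suffices to produce a linear $f : \mathcal{L} \to \mathbb{F}$ with $f|_{[\mathcal{L},\mathcal{L}]} = 0$, $f(x) = \phi(x)$, and $f(y) = \phi(y)$; then $\Phi_f$ agrees with $T$ on $\{x, y\}$. If $\bar x$ and $\bar y$ are linearly independent in $V$, any prescribed values can be realized by some linear functional $\tilde f$ on $V$, and we take $f = \tilde f \circ \pi$. Otherwise $\bar y = \lambda \bar x$ for some $\lambda \in \mathbb{F}$ (covering also $\bar x = \bar y = 0$, in which case $\phi(x) = \phi(y) = 0$ and $f = 0$ works); homogeneity of $\tilde\phi$ then gives $\phi(y) = \lambda \phi(x)$, so any $\tilde f$ on $V$ with $\tilde f(\bar x) = \phi(x)$ automatically satisfies $\tilde f(\bar y) = \phi(y)$. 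The main subtlety is precisely this line-dependent case: it forces the degree-one homogeneity of $\tilde\phi$, which is the only consistency constraint required and still leaves room (thanks to $\dim V \geq 2$) to choose $\tilde\phi$ nonlinear.
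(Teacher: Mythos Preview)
Your proof is correct and follows essentially the same strategy as the paper: perturb the identity by $\phi(x)\,z$ for a homogeneous but non-additive $\phi$ factoring through $\mathcal{L}/[\mathcal{L},\mathcal{L}]$, and match any pair $x,y$ by a linear functional $f$, so that $\exp D_f = I + D_f$ is the required automorphism. The only cosmetic differences are that the paper chooses a complementary subspace $V$ with a basis and uses just two coordinates $\lambda_1,\lambda_2$, and it outsources the verification that the relevant $D$'s are derivations (and that a matching linear $D$ exists for each pair) to \cite{AKR15}, whereas you work with the quotient and prove these facts directly; one tiny omission is that your dependent case ``$\bar y=\lambda\bar x$'' should be stated up to swapping $x$ and $y$ to cover $\bar x=0$, $\bar y\neq 0$.
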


\begin{proof}  Let us consider a decomposition of  $\mathcal{L}$ in the following
form
$$
\mathcal{L}=[\mathcal{L},\mathcal{L}]\oplus V.
$$
Due to the assumption $\dim[\mathcal{L},\mathcal{L}]\leq n-2,$ we
have $\dim V=k\geq 2.$ Let $\{e_1,\ldots, e_k\}$ be a basis of $V$
and let \(f\) be a homogeneous non additive function  on
$\mathbb{F}^2.$

According to the assumptions (ii) of the theorem there exists a
non zero central element in $[\mathcal{L}, \mathcal{L}].$  Let us
fix it as $z\in Z(\mathcal{L}).$ By the proof of Theorem~~3.1 from
\cite{AKR15} it follows that an operator $T$ on $\mathcal{L}$
defined as
\begin{center}
$T(x)=f(\lambda_1,\lambda_2)z,\,\,$ for
$\,\,x=x_1+\sum\limits_{i=1}^k\lambda_i e_i \in \mathcal{L},$
\end{center}
where $\lambda_i\in \mathbb{F},$ $i=1,\ldots,k,$ $x_1\in
[\mathcal{L},\mathcal{L}],$  is a 2-local derivation which is not
a  derivation.

Set
$$
\Delta(x)=x+T(x).
$$
Let us show that $\Delta$ is a $2$-local automorphism.  Again by
the proof  of Theorem~~3.1 from \cite{AKR15} it follows that for
every pair \(x, y\in \mathcal{L}\)  there exist \(a, b\in
\mathbb{F}\) such that the operator \(D\) defined as
\begin{center}
$D(x)=(a\lambda_1+b\lambda_2)z,\,\,$ for
$\,\,x=x_1+\sum\limits_{i=1}^k \lambda_i e_i \in \mathcal{L},$
\end{center}
is a derivation and
$$
T(x)=D(x),\,\, T(y)=D(y).
$$
Since \(D(\mathcal{L})=\{\lambda z: \lambda\in \mathbb{F}\}\) and
\(z\in Z(\mathcal{L})\cap [\mathcal{L}, \mathcal{L}],\) it follows
that \(D\) is a nilpotent derivation of order 2, i.e. \(D^2=0.\)
Therefore,
$$
\Phi(x)=(\exp D)(x)=x+D(x)
$$
is an automorphism.  Since
$$
\Delta(x)=\Phi(x),\,\, \Delta(y)=\Phi(y),
$$
$\Delta$ is a $2$-local automorphism, as required. The proof is
complete.
\end{proof}

Let $\mathcal{L}$ be an $n$-dimensional nilpotent Lie algebra with
$n\geq 2.$ Then $[\mathcal{L},\mathcal{L}]\neq\mathcal{L},$
otherwise, $\mathcal{L}^k=\mathcal{L}\neq\{0\}$ for all $k\geq 1.$

Suppose that $\dim[\mathcal{L},\mathcal{L}]= n-1.$ Then direct
computations show that
$$
\{0\}\neq\mathcal{L}^1=[\mathcal{L}, \mathcal{L}]
=[[\mathcal{L},\mathcal{L}],\mathcal{L}]=\mathcal{L}^2,
$$
and therefore $\mathcal{L}^k=\mathcal{L}^{k+1}\neq\{0\}$ for all
$k\in \mathbb{N}.$ This contradicts the nilpotency of
$\mathcal{L}.$ So $\dim[\mathcal{L},\mathcal{L}]\leq  n-2.$

Note that for any nilpotent Lie algebra \(Z(\mathcal{L})\cap
\left[\mathcal{L}, \mathcal{L}\right]\neq \{0\},\) because
\(\{0\}\neq \mathcal{L}^{k-1}\subseteq Z(\mathcal{L})\cap
\left[\mathcal{L}, \mathcal{L}\right],\) where
\(\mathcal{L}^{k-1}\neq \{0\}\) and \(\mathcal{L}^{k}= \{0\}.\)
So, Theorem~\ref{pure} implies the following result.

\begin{theorem}\label{nil}
Let $\mathcal{L}$ be a finite-dimensional nilpotent  Lie algebra
with $\dim \mathcal{L}\geq 2.$ Then $\mathcal{L}$ admits a
$2$-local automorphism which is not an automorphism.
\end{theorem}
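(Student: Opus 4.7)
The plan is to deduce Theorem~\ref{nil} as a direct corollary of Theorem~\ref{pure}: it suffices to verify, for every $n$-dimensional nilpotent Lie algebra $\mathcal{L}$ with $n \geq 2$, the two hypotheses (i) $\dim[\mathcal{L},\mathcal{L}] \leq n-2$ and (ii) $Z(\mathcal{L}) \cap [\mathcal{L},\mathcal{L}] \neq \{0\}$.

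To check (i), I would rule out $\dim[\mathcal{L},\mathcal{L}] \in \{n-1, n\}$. The case $[\mathcal{L},\mathcal{L}] = \mathcal{L}$ gives $\mathcal{L}^k = \mathcal{L}$ for all $k$ by induction, contradicting nilpotency. For the codimension-one case, fix a decomposition $\mathcal{L} = [\mathcal{L},\mathcal{L}] \oplus \mathbb{F} e$ and observe the key inclusion $[e,\mathcal{L}] = [e,[\mathcal{L},\mathcal{L}]] \subseteq [\mathcal{L},[\mathcal{L},\mathcal{L}]] = \mathcal{L}^2$ (using $[e,\mathbb{F} e] = 0$). Combining this with $[\mathcal{L},\mathcal{L}] = [[\mathcal{L},\mathcal{L}],\mathcal{L}] + [e,\mathcal{L}]$ yields $\mathcal{L}^1 \subseteq \mathcal{L}^2$, whence inductively $\mathcal{L}^1 = \mathcal{L}^k$ for all $k \geq 1$, again contradicting nilpotency since $\dim \mathcal{L}^1 = n-1 \geq 1$.

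To check (ii), let $k$ be minimal with $\mathcal{L}^k = \{0\}$. When $\mathcal{L}$ is non-abelian we have $k \geq 2$, so $\mathcal{L}^{k-1}$ is a nonzero subspace of $[\mathcal{L},\mathcal{L}]$, and the identity $[\mathcal{L}^{k-1},\mathcal{L}] = \mathcal{L}^k = \{0\}$ places it inside $Z(\mathcal{L})$, furnishing the required nonzero element of the intersection. With both hypotheses verified, Theorem~\ref{pure} supplies a $2$-local automorphism on $\mathcal{L}$ that is not an automorphism.

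The main technical point is the codimension-one case of (i), where the trick is to recognize that $[e,\mathcal{L}]$ contributes nothing beyond $\mathcal{L}^2$; all other steps are essentially bookkeeping with the descending central series. A residual subtlety worth flagging is the abelian case of dimension $\geq 2$: hypothesis (ii) fails there, so Theorem~\ref{pure} does not apply directly, and one would need to produce a $2$-local automorphism by hand, for instance a ray-homogeneous non-linear self-map realized pairwise by elements of $\mathrm{Aut}(\mathcal{L}) = GL(\mathcal{L})$.
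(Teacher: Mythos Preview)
Your approach is exactly the paper's: verify hypotheses (i) and (ii) of Theorem~\ref{pure} and conclude. Your treatment of the codimension-one case of (i) spells out what the paper dismisses as ``direct computations,'' and your argument for (ii) via the last nonzero term of the lower central series is identical to the paper's.

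You are right to flag the abelian case. The paper asserts that $Z(\mathcal{L})\cap[\mathcal{L},\mathcal{L}]\neq\{0\}$ for \emph{any} nilpotent $\mathcal{L}$, but this fails when $[\mathcal{L},\mathcal{L}]=\{0\}$; in that situation $k=1$ and $\mathcal{L}^{k-1}=\mathcal{L}$ is not contained in $[\mathcal{L},\mathcal{L}]$. So Theorem~\ref{pure} as stated does not cover abelian algebras, and the paper's deduction of Corollary~\ref{abel} from Theorem~\ref{nil} inherits this gap. Your proposed patch is the natural one: for abelian $\mathcal{L}$ of dimension $\geq 2$ one has $\mathrm{Aut}(\mathcal{L})=GL(\mathcal{L})$, and any homogeneous non-additive map (e.g.\ $x\mapsto x+f(\lambda_1,\lambda_2)e_1$ for a basis $e_1,\dots,e_n$ and a homogeneous non-additive $f$) is realized on each pair by a linear isomorphism, hence is a $2$-local automorphism that is not linear.
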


The corollary below is an immediate consequence of
Theorem~\ref{nil}.

\begin{corollary}\label{abel}
Let $\mathcal{L}$ be a finite-dimensional abelian Lie algebra with
$\dim \mathcal{L}\geq 2.$ Then $\mathcal{L}$ admits a $2$-local
automorphism which is not an automorphism.
\end{corollary}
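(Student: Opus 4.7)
The proof plan is as follows. The statement is presented in the paper as an immediate consequence of Theorem~\ref{nil}, but that route proceeds via Theorem~\ref{pure}, whose hypothesis $Z(\mathcal{L})\cap[\mathcal{L},\mathcal{L}]\neq\{0\}$ fails in the abelian case (since there $[\mathcal{L},\mathcal{L}]=\{0\}$). So I would give a direct construction that does not use the Lie bracket.

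Since $\mathcal{L}$ is abelian, the Jacobi identity is trivial and the automorphism group is just $\mathrm{Aut}(\mathcal{L})=GL(\mathcal{L})$. Thus a $2$-local automorphism is any map $T\colon\mathcal{L}\to\mathcal{L}$ such that for every $x,y\in\mathcal{L}$ there exists $A_{x,y}\in GL(\mathcal{L})$ with $A_{x,y}(x)=T(x)$ and $A_{x,y}(y)=T(y)$. I would produce $T$ as a direction-dependent scaling: pick any function $g\colon\mathcal{L}\setminus\{0\}\to\mathbb{F}^{*}$ which is constant on scalar rays, $g(cx)=g(x)$ for all $c\in\mathbb{F}^{*}$, but is non-constant on $\mathbb{P}(\mathcal{L})$ as a whole. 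Such $g$ exists because $\dim\mathcal{L}\geq 2$ forces $\mathbb{P}(\mathcal{L})$ to contain more than one point; for instance, fix a line $\ell\subset\mathcal{L}$ and set $g\equiv 1$ on $\ell\setminus\{0\}$ and $g\equiv 2$ elsewhere. Then define $T(0)=0$ and $T(x)=g(x)\,x$ for $x\neq 0$.

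The verification has two parts. First, $T$ is a $2$-local automorphism. If $x,y$ are linearly dependent (including cases where one is zero) the scalar map $A_{x,y}=g(x)\cdot\mathrm{Id}$ works: for $y=cx$ one has $A_{x,y}(y)=c\,g(x)\,x=g(cx)(cx)=T(y)$ by ray-constancy. If $x,y$ are linearly independent, extend $\{x,y\}$ to a basis and let $A_{x,y}$ send $x\mapsto g(x)x$, $y\mapsto g(y)y$, and fix the remaining basis vectors; this lies in $GL(\mathcal{L})$ because $g(x),g(y)\in\mathbb{F}^{*}$. Second, $T$ is not additive, hence not an automorphism. Choosing linearly independent $x,y$ with $g(x)\neq g(y)$ (which exist since $g$ is non-constant on $\mathbb{P}(\mathcal{L})$), one has $T(x+y)=g(x+y)(x+y)$ while $T(x)+T(y)=g(x)x+g(y)y$; comparing coefficients in the linearly independent pair $\{x,y\}$ would force $g(x+y)=g(x)=g(y)$, a contradiction.

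The only real obstacle is noticing the gap in the bracket-based proof of Theorem~\ref{nil} in the abelian setting, and finding a construction that is genuinely linear-algebraic rather than Lie-theoretic; once the ``scale by a projective function'' idea is in place, both the $2$-locality and the failure of linearity are essentially immediate.
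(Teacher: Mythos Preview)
Your observation is correct and sharper than the paper itself: the corollary is stated as an immediate consequence of Theorem~\ref{nil}, but the paper's derivation of Theorem~\ref{nil} from Theorem~\ref{pure} uses the claim that $\mathcal{L}^{k-1}\subseteq Z(\mathcal{L})\cap[\mathcal{L},\mathcal{L}]$ where $\mathcal{L}^{k-1}\neq\{0\}$ and $\mathcal{L}^k=\{0\}$. For an abelian $\mathcal{L}$ one has $k=1$, so $\mathcal{L}^{k-1}=\mathcal{L}$, which is \emph{not} contained in $[\mathcal{L},\mathcal{L}]=\{0\}$. Thus hypothesis~(ii) of Theorem~\ref{pure} genuinely fails, and the chain Theorem~\ref{pure} $\Rightarrow$ Theorem~\ref{nil} $\Rightarrow$ Corollary~\ref{abel} does not reach the abelian case as written.

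Your direct construction is valid and neatly bypasses the issue. Since $\mathrm{Aut}(\mathcal{L})=GL(\mathcal{L})$ in the abelian case, the problem is purely linear-algebraic, and the ray-constant scaling $T(x)=g(x)x$ is easily seen to be $2$-local (diagonal in any basis containing the pair when independent, a scalar when dependent) and non-additive whenever $g$ is not globally constant. The only cosmetic point to tidy is the degenerate case $x=0$, $y\neq 0$: there one should take $A_{x,y}=g(y)\cdot\mathrm{Id}$ rather than $g(x)\cdot\mathrm{Id}$, since $g$ is undefined at $0$.

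Compared with the paper's intended route, your argument is both more elementary and more robust: it avoids the $\exp D$ machinery and the nilpotent-derivation detour entirely, at the cost of being specific to the abelian situation. The paper's approach (had the gap been patched, say by treating abelian $\mathcal{L}$ separately or by relaxing~(ii) to $Z(\mathcal{L})\neq\{0\}$ with a suitable choice of $z$ and $V$) has the advantage of a uniform construction across all nilpotent algebras; yours has the advantage of actually being complete.
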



\begin{thebibliography}{99}




\bibitem{AK} Sh. A. Ayupov, K. K. Kudaybergenov,
 $2$-local derivations and automorphisms on $B(H),$
J. Math. Anal. Appl., \textbf{395} (2012)  15--18.


\bibitem{AKNA}
Sh. A. Ayupov, K. K. Kudaybergenov, B. O. Nurjanov, A. K.
Alauadinov, Local and $2$-local derivations on noncommutative
Arens algebras,  Math. Slovaca,  \textbf{64} (2014) 423--432.

\bibitem{AA2} Sh. A. Ayupov, F. N. Arzikulov, $2$-local derivations on
semi-finite von Neumann algebras, Glasgow Math. Jour. \textbf{56}
(2014) 9--12.


\bibitem{AK14}
Sh. A. Ayupov, K.K. Kudaybergenov, $2$-local derivations on von
Neumann algebras, \textit{Positivity,} 19  (2015) 445--455.

\bibitem{AKR15}  Sh.~A.~Ayupov, K.~K.~Kudaybergenov, I.~S.~Rakhimov, 2-Local
derivations on finite-dimensional Lie algebras, \textit{Linear
Algebra and its Applications,} 474 (2015), 1-11.

\bibitem{AK2016} Sh.~A.~Ayupov, K.~K.~Kudaybergenov, Local derivations on
finite dimensional Lie algebras, \textit{Linear Algebra Appl.} 493
(2016) 381--398.



\bibitem{Wang} Z.~Chen, D.~Wang, Nonlinear maps satisfying derivability on
standard parabolic subalgebras of finite-dimensional simple Lie
algebras, \textit{Linear Multilinear Algebra},  59 (2011)
261--270.

\bibitem{Chen}  Z.~Chen, D.~Wang, 2-Local automorphisms of finite-dimensional simple Lie
algebras, \textit{Linear Algebra and its Applications}, 486 (2015)
335--344.


{\bibitem{Humphreys} J. E. Humphreys, Introduction to Lie algebras
and representation theory, Springer, New York, 1972.}


\bibitem{Jacob} N. Jacobson, Lie algebras, Dover Publications, Inc.  New York, 1979.


\bibitem{Kadison90} R.V. Kadison, Local derivations, \textit{J. Algebra},  130 (1990) 494--509.

\bibitem{Kim04} S. O. Kim, J. S. Kim, Local automorphisms and
derivations on $M_n,$ Proc. Amer. Math. Soc., \textbf{132} (2004)
1389--1392.



\bibitem{Larson90} D.~R.~Larson, A.~R.~Sourour, Local derivations and local
automorphisms of B(X), \textit{Proc. Sympos. Pure Math.} 51 (1990)
187--194.



\bibitem{Semrl97} P. \v{S}emrl, Local automorphisms and derivations
 on $B(H),$ Proc. Amer. Math. Soc., \textbf{125}
(1997)  2677--2680.

\bibitem{SW} A.A. Sagle, R.E. Walde, Introduction to Lie algebras and Lie groups, Academic Press, New York, 1973.



%\bibitem{Serre} J.-P. Serre, Complex semisimple Lie Algebras, Springer-Verlag, New York, 2001.



\end{thebibliography}
\end{document}